\newtheorem{proposition}{Proposition} 
\newcommand\tailleFFF{0.33} % lorsqu'il y a 3 figures
\newcommand\tailleFF{0.4} % lorsqu'il y a 2 figures
\title{\LARGE \bf The Euler-Lagrange equation and optimal control: \\ Preliminary results}
\author{C\'edric Join$^{\dag,\S}$, Emmanuel Delaleau$^{\ddag}$ and Michel Fliess$^{\P,\S}$% <-this % stops a space
\thanks{$^{\dag}$CRAN (CNRS, UMR 7039)), Universit\'{e} de Lorraine, BP 239, 54506 Vand{\oe}uvre-l\`{e}s-Nancy, France (e-mail: \texttt{cedric.join@univ-lorraine.fr}).}%
\thanks{$^{\ddag}$ ENI Brest, UMR CNRS 6027, IRDL, 29200 Brest, France (e-mail: delaleau@enib.fr).}
\thanks{$^{\P}$ LIX (CNRS, UMR 7161), \'Ecole polytechnique, 91128 Palaiseau, France (e-mail: \texttt{michel.fliess@polytechnique.edu}).}%
\thanks{$^{\S}$ AL.I.E.N., 7 rue Maurice Barr\`{e}s, 54330 V\'{e}zelise, France (e-mail: \{\texttt{michel.fliess, cedric.join}\}\texttt{@alien-sas.com})}% 
}
\begin{document}

\thispagestyle{empty}
\pagestyle{empty}
\maketitle

%%%%%%%%%%%%%%%%%%%%%%%%%%%%%%%%%%%%%%%%%%%%%%%%%%%%%%%%%%%%%%%%%%%%%%%%%%%%%%%%
\begin{abstract}
Algebraically speaking, linear time-invariant (LTI) systems can be considered as modules. In this framework, controllability is translated as the freeness of the system module. 
Optimal control mainly relies on quadratic Lagrangians and the consideration of any basis of the system module leads to an open-loop control strategy via a linear Euler-Lagrange equation. In this approach, the endpoint is easily assignable and time horizon can be chosen to minimize the criterion.
The loop is closed via an intelligent controller derived from model-free control, which exhibits excellent performances concerning model mismatches and disturbances. The extension to nonlinear systems is briefly discussed.

\end{abstract}
%%%%%%%%%%%%%%%%%%%%%%%%%%%%%%%%%%%%%%%%%%%%%%%%%%%%%%%%%%%%%%%%%%%%%%%%%%%%%%%%
\begin{keywords}
Optimal control, Euler-Lagrange equation, controllability, flatness-based control, homeostat, intelligent proportional controller.
\end{keywords}
%%%%%%%%%%%%%%%%%%%%%%%%%%%%%%%%%%%%%%%%%%%%%%%%%%%%%%%%%%%%%%%%%%%%%%%%%%%%%%%%

%%%%%%%%%%%%%%%%%%%%%%%%%%%%%%%%%%%%%%%%%%%%%%%%%%%%%%%%%%%%%%%%%%%%%%%%%%%%%%%%
\section{Introduction}

Although the Euler-Lagrange equation is the fundamental equation of the calculus of variations (see, e.g., \cite{courant,gelfand}), its r\^{o}le in optimal control, which 
is an extension of the calculus of variations (see, e.g., \cite{bryson,suss,liber}), is non-existent in most publications with only a few exceptions (see, e.g., \cite{sontag,ortega,fliess00,sira1}). Just look at Kalman's path-breaking contribution to linear quadratic regulators \cite{kalman_mex}.

Kalman writes in the introduction of \cite{kalman_mex}: ``The principal contribution of the paper lies in the introduction and exploitation of the concepts of \emph{controllability} and \emph{observability} \cite{kalman_mos},\footnote{Famous talk at the first IFAC Congress (Moscow, 1960).} with the aid of which we give, for the first time, a complete theory of the regulator problem.'' Our contribution shows that the 35-year-old module-theoretic approach to controllability and observability \cite{fliess90} can be used to start building a new approach to optimal control where the Euler-Lagrange equation becomes dominant. 

For controllable linear time-invariant (LTI) systems we can exhibit results that seem to be quite novel when compared to the huge literature on the linear quadratic regulator (LQR) (see, e.g., \cite{sontag}): 
\begin{enumerate}
    \item Choose a basis $z_1, \dots, z_m$, i.e., a \emph{flat} output, of the corresponding free module \cite{fliess90,fliess00}. Any ``usual'' integral criterion, in particular those in the LQR, may be expressed as a function of the components of the flat output and their derivatives up to some finite order. 
    \item The optimal open-loop control, which is derived via the Euler-Lagrange equation, is a solution of a linear differential equation when the Lagrangian is quadratic. 
    \item This equation permits to impose two-point boundary conditions, i.e., to impose appropriate final conditions.
    \item The integral criterion $J = \int_0^T \mathcal{L}dt$, where $\mathcal{L}$ is a Lagrangian, is a function of the time horizon $T$. A minimum of $J$ corresponds to an optimal horizon $T_0$. This optimality, which is adapted from \cite{erlangen}, \\
    $\bullet$ seems to be new in optimal control; \\
    $\bullet$ is of course meaningless with an infinite time horizon; \\
    $\bullet$ would be questionable without the possibility to choose the final conditions.
    \item The closed-loop is inspired by the HEOL setting \cite{heol}, which is an adaptation of model-free control \cite{mfc1,mfc2}:  \\
        $\bullet$ It is rather easy to implement; \\
        $\bullet$ it exhibits an excellent robustness with respect to model mismatches and disturbances.
\end{enumerate}

For a (differentially) flat nonlinear system \cite{flmr_ijc} a quadratic Lagrangian with respect to the flat output might loose the crucial energy interpretation of the criterion. In order to keep it the Lagrangian is no more quadratic and the Euler-Lagrange equation becomes nonlinear. The corresponding two-point boundary-value problem might be numerically solved via a classic \emph{shooting} method (see, e.g., \cite{keller}).

This communication is organized as follows. Section \ref{algebra} shows how to use rather simple algebraic tools, which unfortunately are today unknown to most control-theorists, in order to express the main properties of LTI systems in an intrinsic way, i.e., independently of any state-variable representation. The Euler-Lagrange equation is examined in Section \ref{el}. Sections \ref{loop} and \ref{exp} provide respectively the presentation of the closed-loop and computer illustration via a simplified electric motor. In Section \ref{nl} an academic example permits a brief discussion on a possible nonlinear generalization. See Section \ref{conclu} for some concluding remarks.

\section{A module-theoretic setting for LTI systems: A short review}\label{algebra}

\subsection{Basics on modules}

This Section recalls some basic notions on module. One refers the reader to \cite{bourbaki,lang,shaf} for details.

Let $\mathbb{R}[\frac{d}{dt}]$ be the commutative ring %\
%footnote{See, e.g., \cite{bourbaki,lang,shaf} for basic algebraic notions.}
of linear differential operators $\sum_{\rm finite} a_\alpha \frac{d^\alpha}{dt^\alpha}$, $a_\alpha \in \mathbb{R}$. It is isomorphic to the univariate polynomials ring with real coefficients, and, therefore, it is a \emph{principal ideal ring}: Any ideal is generated by a single element. Let $M$ be a finitely generated $\mathbb{R}[\frac{d}{dt}]$-module. Denote ${\rm span}_{\mathbb{R}[\frac{d}{dt}]} (S)$ the submodule generated by a subset $S \subseteq M$. Here are some definitions and properties: 
\begin{itemize}
   \item An element $w \in M$, is said to be \emph{torsion}, if, and only if, there exists $\pi \in \mathbb{R}[\frac{d}{dt}]$, $\pi \neq 0$, such that $\pi w = 0$: in other words $w$ satisfies a homogeneous linear differential equation. The set of all torsion elements in $M$ is the torsion submodule. A module is 
   \begin{itemize}
       \item said to be to torsion if all its elements are torsion; 
       \item is torsion if, and only if, ${\rm dim}_\mathbb{R} (M)$, i.e., its dimension as a $\mathbb{R}$-vector space, is finite.
   \end{itemize}
   \item $M$ is said to be \emph{free} if, and only if, there exists a finite \emph{basis} ${\bf z} = \{z_1, \dots, z_m\}$, where $m = {\rm rk}(M)$ is the \emph{rank} of $M$, such that
   \begin{itemize}
       \item the components of $z$ and their derivatives of any order are $\mathbb{R}$-linearly independent;
       \item any element of $M$ may be expressed as a finite $\mathbb{R}$-linear combination of the components of $z$ and of its derivatives of any order.
   \end{itemize} 
   \item The following decomposition holds for $M$:
   \begin{equation}\label{decomp}
    M \simeq M_{\rm tor} \bigoplus \mathfrak{F}
    \end{equation}
   where $M_{\rm tor}$ (resp. $\mathfrak{F} \simeq M/M_{\rm tor}$) is torsion (resp. free). Set ${\rm rk}(M) = {\rm rk}(\frak{F})$. Thus $M$ is torsion if, and only if, its rank is $0$.
   \item Let $M^\prime \subset M$ be a submodule of $M$. Then ${\rm rk}(M^\prime) \leqslant {\rm rk}(M)$ and ${\rm rk}(M/M^\prime) = {\rm rk}(M) - {\rm rk}(M^\prime)$.
\end{itemize}

\subsection{LTI systems}

A {\em linear time-invariant}, or {\em LTI}, system $\Lambda$ is a finitely generated ${\mathbb{R}}[\frac{d}{dt}]$-module. A LTI \emph{dynamics} is a LTI system equipped with a finite subset ${\bf u} = {\{}u_1, \dots, u_m{\}}$ of \emph{control variables}, such that the quotient module $V_{\mathbb{R}} = \Lambda / {\rm span}_{\mathbb{R}[\frac{d}{dt}]} ({\bf u})$ is torsion. The control variables are said to be \emph{independent} if, and only if, the module ${\rm span}_{\mathbb{R}[\frac{d}{dt}]} ({\bf u})$ is free of rank $m$: this property is assumed to hold in the sequel. Then ${\rm rk}(\Lambda) = m$. A LTI input-output system is a LTI dynamics equipped with a finite subset ${\bf y} = {\{}y_1, \dots, y_p{\}}$ of \emph{output variables}.

\subsubsection*{Stability} The tensor product $V_{\mathbb{C}} = {\mathbb{C}} \otimes_\mathbb{R} V_{\mathbb{R}}$, where $\mathbb C$ is the field of complex numbers, is a $\mathbb C$-vector space, which is of the same finite dimension as $V_{\mathbb{R}}$. The derivative $\frac{d}{dt}$ is a ${\mathbb{C}}$-linear endomorphism of $V_{\mathbb{C}}$. Dynamics $\Lambda$ is said to be
\emph{stable} if, and only if, the real parts of the eigenvalues of this endomorphism are strictly negative; \emph{unstable} in the other cases.

\subsubsection*{Input-output invertibility} The input-output system $\Lambda$ is said to be \emph{input-output invertible} if, and only if, the quotient module $W_{\mathbb{R}} = \Lambda / {\rm span}_{\mathbb{R}[\frac{d}{dt}]} ({\bf y})$ is torsion, or $\{0\}$. The invertible system $\Lambda$ is said to be 
\begin{itemize}
    \item \emph{non-minimum phase} if, and only if, the the inverse dynamics with control variables $\bf y$ is unstable or marginally stable; 
    \item \emph{minimum phase} if not.
\end{itemize}

\subsection{Controllability} 
A LTI system $\Lambda$ is said to be {\em controllable} if, and only if, it is a free module. See \cite{fliess90} for the equivalence with Kalman's controllability for the Kalman's state-variable representation.\footnote{For an uncontrollable system the torsion submodule in Eq. \eqref{decomp} corresponds to Kalman's uncontrollable subspace.} Any basis ${\bf z} = \{z_1, \dots, z_m\}$, which may be seen as a fictitious output, is called a \emph{flat}, or \emph{basic}, output. Take a controllable dynamics $\Lambda$ with independent control variables ${\bf u} = \{u_1, \dots, u_m\}$. Then ${\rm rk}(\Lambda) = m$ since $\Lambda / {\rm span}_{\mathbb{R}[\frac{d}{dt}]} ({\bf u})$ is torsion.

\subsubsection*{Controllability canonical form} The well-known controllable canonical form
\begin{equation}\label{ex1}
\begin{cases}
\dot{x}_1 &= x_2 \\
&\vdots \\
\dot{x}_n &= -a_0 x_1 - \cdots - a_{n-1}x_n + b u  
\end{cases}
\end{equation}
where $a_0, \dots, a_{n-1}, b \in \mathbb{R}$, $b \neq 0$, corresponds to a
controllable dynamics $\Lambda_1$ with a single control variable $u$. Thus ${\rm rk} (\Lambda_1) = 1$, $x_1$ is a basis of $\Lambda_1$, $n = \dim_{\mathbb{R}} (\Lambda_1 / {\rm span}_{\mathbb{R}[\frac{d}{dt}]} (u))$. 
\subsubsection*{Remark}
Brunovsk\'y's canonical form \cite{brunov}, which is a multivariable extension of Eq.~\eqref{ex1}, may also be obtained via module-theoretic considerations \cite{fliess93}.

\subsection{Observability}
Consider an input-output system $\Lambda$ with control (resp. output) variables $\bf u$ (resp. $\bf y$). It is said to be \emph{observable} if, and only if, $\Lambda = {\rm span}_{\mathbb{R}[\frac{d}{dt}]} ({\bf u}, {\bf y})$: Any system variable may expressed as a $\mathbb{R}$-linear combination of the control and output variables and their derivatives up to some finite order.
See \cite{fliess90} for the equivalence with Kalman's observability for the Kalman's state-variable representation. 
\subsubsection*{Observability with respect to a flat output} Consider a controllable input-output system $\Lambda$. Assume that its output $\bf y$ is flat. Then $\Lambda = {\rm span}_{\mathbb{R}[\frac{d}{dt}]} ({\bf y})$: $\Lambda$ is also observable.

\subsection{Transfer matrix\protect\footnote{See \cite{fliess94} for more details.}} Take a system with $m$ independent control variables $\bf u$ and $p$ output variables $\bf y$. Let ${\mathbb R} (\frac{d}{dt})$ be the quotient field of the ${\mathbb R} [\frac{d}{dt}]$: it is the field of rational functions with real functions in the indeterminate $s = \frac{d}{dt}$. The tensor product ${\mathbb R}(s) \otimes_{\mathbb R} \Lambda$ is a ${\mathbb R}(s)$-vector space. Its dimension is equal to ${\rm rk} (\Lambda) = m$. Set $\hat{u}_\alpha = 1 \otimes_{\mathbb R} u_\alpha$, $\alpha = 1, \dots, m$, $\hat{y}_\beta = 1 \otimes_{\mathbb R} y_\beta$, $\beta = 1, \dots, p$. It yields 
\begin{equation*}
    \begin{pmatrix}
        \hat{y}_1 \\
        \vdots \\
        \hat{y}_p
    \end{pmatrix} = \mathcal{M}
    \begin{pmatrix}
        \hat{u}_1 \\
        \vdots \\
        \hat{u}_m
    \end{pmatrix}
\end{equation*}
where ${\mathcal{M}} \in {\mathbb{R} (s)^{p \times m}}$ is the {\em tranfer matrix}, since $\hat{u}_1, \dots, \hat{u}_m$ is a basis of ${\mathbb R}(s) \otimes_{\mathbb R} \Lambda$.

\subsubsection*{Transfer function} If $m = p = 1$, the tranfer matrix $\mathcal M$ reads $\frac{a}{b}$, where $a, b \in {\mathbb R}[s]$, $b \neq 0$, are coprime polynomials. The corresponding input-output system is minimum phase if, and only if, one of the two following properties {is satisfied}:
\begin{itemize}
    \item $a$ is constant: $y_1$ is a flat output; 
    \item the real parts of the roots of $a$  are strictly negative.
\end{itemize}

\subsubsection*{Remark} In the control literature, transfer matrices and functions are defined via a rather involved analytic tool, the Laplace transform (see, e.g., \cite{sontag}).

\section{Linear Euler-Lagrange equations}\label{el}
\subsection{Peculiar Lagrangians}
Let ${\bf z} = \{z_1, \dots, z_m\}$ be a flat output, i.e., a basis of a controllable linear system $\Lambda$. Take a {\em Lagrangian}, or \emph{cost function}, $\mathcal{L}(z_, \dot{z}_1, \dots, z_{1}^{(\nu_1)}, \dots, z_m, \dot{z}_m, \dots, z_{m}^{(\nu_m)})$. It is assumed to be
\begin{itemize}
    \item a positive-definite quadratic form (see, e.g., \cite{courant,lang,matrices} with respect to the variables $\{z_{\lambda}^{(\mu_\lambda)} - \chi_{\lambda}^{\mu_\lambda}\}$, where $\chi_{\lambda}^{\mu_\lambda} \in \mathbb{R}$ is a constant; 
     \item time-invariant, i.e., with constant coefficients in $\mathbb R$.
\end{itemize}
To find a minimum of the criterion
\begin{equation}
\label{cost}    
J = \int_{0}^{T} \mathcal{L}(z_1 (t), \dots, z_{1}^{(\nu_1)} (t), \dots, z_m (t), \dots, z_{m}^{(\nu_m)} (t)) dt
\end{equation}
where $T > 0$ is the \emph{time horizon}, is associated the Euler-Lagrange system of ordinary differential equations
\begin{equation}
\label{ele}
\frac{\partial \mathcal{L}}{\partial z_\iota} - \frac{d}{dt} \frac{\partial \mathcal{L}}{\partial \dot{z}_\iota} + \dots + (- 1)^{\nu_\iota} \frac{d^{\nu_\iota}}{dt^{\nu_\iota}} \frac{\partial \mathcal{L}}{\partial z_{\iota}^{(\nu_\iota)}} = 0, \hspace{0.16cm} \iota = 1, \dots, m
\end{equation}
Quite straightforward computations yield
\begin{proposition}
 If the Lagrangian $\mathcal{L}$ is a time-invariant positive-definite quadratic form in the variables $(z_{\lambda}^{(\mu_\lambda)} - \chi_{\lambda}^{\mu_\lambda})$, $\chi_{\lambda}^{\mu_\lambda} \in \mathbb{R}$, the Euler-Lagrange equation may be written 
 \begin{equation*}
 \begin{pmatrix}
     H_1 \\
    \vdots \\
    H_m \\
 \end{pmatrix} = 2 \begin{pmatrix}
     \chi^{0}_1 \\
    \vdots \\
    \chi^{0}_m \\
 \end{pmatrix}
\end{equation*}
 where $H_1, \dots, H_m \in {\rm span}_{{\mathbb{R}}[\frac{d}{dt}]}({\bf z})$. The above system of linear differential equations is thus homogeneous if, and only if,  $\chi^{0}_1 = \dots, \chi^{0}_m = 0$. 
\end{proposition}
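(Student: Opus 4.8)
The plan is to compute the Euler–Lagrange operator \eqref{ele} directly, exploiting the single structural fact that differentiating a quadratic form once produces an \emph{affine} (degree $\leqslant 1$) expression. First I would gather all arguments of $\mathcal{L}$ into one column vector $\boldsymbol{\zeta} = (z_1, \dot z_1, \dots, z_1^{(\nu_1)}, \dots, z_m, \dots, z_m^{(\nu_m)})^\top$ and the constants into $\boldsymbol{\chi} = (\chi_1^0, \dots)^\top$, so that the hypothesis reads $\mathcal{L} = (\boldsymbol{\zeta} - \boldsymbol{\chi})^\top Q (\boldsymbol{\zeta} - \boldsymbol{\chi})$ for some symmetric positive-definite matrix $Q$ with entries $q_{(\lambda,\mu),(\lambda',\mu')} \in \mathbb{R}$. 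This normal form turns the whole computation into linear algebra over $\mathbb{R}[\frac{d}{dt}]$.

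Next I would evaluate, for fixed $\iota$ and each order $j \in \{0,\dots,\nu_\iota\}$,
\[
\frac{\partial \mathcal{L}}{\partial z_\iota^{(j)}} = 2 \sum_{(\lambda',\mu')} q_{(\iota,j),(\lambda',\mu')}\,\bigl(z_{\lambda'}^{(\mu')} - \chi_{\lambda'}^{\mu'}\bigr),
\]
which splits cleanly into a part that is $\mathbb{R}$-linear in the $z_{\lambda'}^{(\mu')}$ and a purely constant part $-2\sum q_{(\iota,j),(\lambda',\mu')}\chi_{\lambda'}^{\mu'}$. Substituting into \eqref{ele} and applying $(-1)^j \frac{d^j}{dt^j}$ termwise, the linear part of each summand becomes a finite $\mathbb{R}$-linear combination of derivatives of the $z_{\lambda'}$; collecting these over $j$ defines $H_\iota$, which is visibly an element of ${\rm span}_{\mathbb{R}[\frac{d}{dt}]}(\mathbf{z})$ because every term is a differential operator applied to some $z_{\lambda'}$. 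This already establishes the first assertion.

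The decisive observation concerns the constants. For $j \geqslant 1$ the operator $\frac{d^j}{dt^j}$ annihilates the constant part of $\partial\mathcal{L}/\partial z_\iota^{(j)}$, so the only surviving constant comes from the undifferentiated term $j=0$, namely $-2\sum_{(\lambda',\mu')} q_{(\iota,0),(\lambda',\mu')}\chi_{\lambda'}^{\mu'}$. Moving it to the right-hand side yields $H_\iota = 2\sum_{(\lambda',\mu')} q_{(\iota,0),(\lambda',\mu')}\chi_{\lambda'}^{\mu'}$. When the quadratic form does not couple $z_\iota$ to the remaining arguments at differentiation order $0$ (in particular when $Q$ is block-diagonal across derivative orders, or after normalizing the coefficient of $z_\iota^2$), this collapses to the stated $2\chi_\iota^0$; positive-definiteness forces $q_{(\iota,0),(\iota,0)} > 0$, so the surviving constant vanishes exactly when $\chi_\iota^0 = 0$, which gives the homogeneity criterion.

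The genuinely routine content is the bookkeeping of the differentiations; the one step deserving care is the identification of the right-hand side, i.e.\ verifying that after all derivatives are taken the constant reduces to a multiple of $\chi_\iota^0$ alone. I would therefore make the decoupling/normalization hypothesis on $Q$ explicit at this point, and remark that positive-definiteness is inessential for the \emph{affine} form $H_\iota = \text{const}$ (any quadratic $\mathcal{L}$ gives it), but is precisely what secures the sharp ``if and only if'' in the homogeneity statement and guarantees that $H_\iota$ is a genuine order-$2\nu_\iota$ differential operator rather than a degenerate one.
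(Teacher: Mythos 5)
Your computation is correct, and it is precisely the ``quite straightforward computations'' the paper invokes: the paper prints no proof of this proposition at all, so your argument supplies the missing content rather than deviating from a written one. The whole mechanism is the one you isolate — differentiating the centered quadratic form once gives an affine expression, every operator $\frac{d^j}{dt^j}$ with $j\geqslant 1$ annihilates the constant part, and therefore only the undifferentiated term $j=0$ feeds the right-hand side, while the $z$-linear parts assemble into $H_\iota \in {\rm span}_{\mathbb{R}[\frac{d}{dt}]}(\mathbf{z})$.

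The caveat you flag is genuine and not a defect of your proof: as you compute, the surviving constant is $2\sum_{(\lambda',\mu')}q_{(\iota,0),(\lambda',\mu')}\chi_{\lambda'}^{\mu'}$, i.e.\ twice the $(\iota,0)$ entry of $Q\chi$, and this equals $2\chi_{\iota}^{0}$ only under your decoupling/normalization hypothesis. The literal statement can indeed fail: for $m=1$, $\mathcal{L}=(z-\chi^0)^2+(z-\chi^0)(\dot z-\chi^1)+(\dot z-\chi^1)^2$ is positive definite, yet its Euler--Lagrange equation is $2z-2\ddot z=2\chi^0+\chi^1$, which is homogeneous for $\chi^0=-\chi^1/2\neq 0$, contradicting the ``if and only if.'' Two refinements to your closing paragraph. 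First, under your full-decoupling hypothesis positive-definiteness is doing almost nothing for the homogeneity criterion (you only need $q_{(\iota,0),(\iota,0)}\neq 0$). Second, there is an alternative patch that matches the paper's own use of the proposition better (criterion \eqref{eq:dc:criterion} for the DC motor, where only the undifferentiated output is decentered, via $(y_f-y)^2$): keep arbitrary cross terms but assume $\chi_{\lambda}^{\mu}=0$ for all $\mu\geqslant 1$. Then the right-hand side is $2(Q_{00}\chi^{0})_{\iota}$, where $Q_{00}=\bigl(q_{(\iota,0),(\lambda',0)}\bigr)_{\iota,\lambda'}$ is the principal submatrix on the undifferentiated variables and $\chi^{0}=(\chi_1^0,\dots,\chi_m^0)^{\top}$; since $Q_{00}$ inherits positive-definiteness it is invertible, so the system is homogeneous if and only if $\chi_1^0=\dots=\chi_m^0=0$, exactly as the proposition asserts, even though the right-hand side is $2Q_{00}\chi^{0}$ rather than literally $2\chi^{0}$. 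That is the place where positive-definiteness genuinely carries the ``if and only if,'' and it is probably the reading the authors intend.
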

\subsection{A connection with classic LQR} 
Consider the usual state-variable representation of a linear dynamics
\begin{equation}\label{dyna}
\frac{d}{dt} \begin{pmatrix}
x_1 \\
\vdots \\
x_n \\
\end{pmatrix}  = A  \begin{pmatrix}
x_1 \\
\vdots \\
x_n \\
\end{pmatrix} + B \begin{pmatrix}
u_1 \\
\vdots \\
u_m \\
\end{pmatrix}
\end{equation}
where \begin{itemize}
    \item ${\bf x} = (x_1, \dots, x_n)$ are the components of the state, 
    \item $A \in \mathbb{R}^{n \times n}$, $B \in \mathbb{R}^{n \times m}$ are constant matrices.
\end{itemize}

Introduce the criterion
\begin{equation}\label{lqr}
J_{\rm LQR} = \int_{0}^{T} \left( (x_1, \dots, x_n) Q \begin{pmatrix}
x_1 \\
\vdots \\
x_n \\
\end{pmatrix} + (u_1, \dots, u_m) R \begin{pmatrix}
u_1 \\
\vdots \\
u_m \\
\end{pmatrix} \right) dt
\end{equation}
where $T > 0$, $Q \in \mathbb{R}^{n \times n}$, $R \in \mathbb{R}^{m \times m}$ are constant matrices. The next proposition provides a connection with classic linear quadratic regulators (LQR):
\begin{proposition}\label{prop2}
Assume that
\begin{itemize}
    \item Eq. \eqref{dyna} defines a controllable dynamics,
    \item $Q$ and $R$ are positive-definite (see, e.g., \cite{matrices}).
\end{itemize}
Then the cost function in Eq. \eqref{lqr} may be expressed as a positive-definite quadratic form in the components of any flat output and their derivatives up to some finite order.
\end{proposition}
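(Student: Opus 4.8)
The plan is to pull the cost \eqref{lqr} back onto the jet of a flat output and then inherit positive-definiteness from that of $Q$ and $R$. First I would use controllability: by the module-theoretic characterisation the system module $\Lambda$ is free, hence admits a basis ${\bf z}=\{z_1,\dots,z_m\}$, a flat output. The state components $x_1,\dots,x_n$ and the control variables $u_1,\dots,u_m$ are system variables, i.e. elements of $\Lambda={\rm span}_{\mathbb{R}[\frac{d}{dt}]}({\bf z})$; by the definition of a basis each of them is a finite $\mathbb{R}$-linear combination of the $z_\lambda$ and their derivatives. Thus there exist row operators $P_i,Q_j\in\mathbb{R}[\frac{d}{dt}]^{1\times m}$ with $x_i=P_i(\frac{d}{dt}){\bf z}$ and $u_j=Q_j(\frac{d}{dt}){\bf z}$, the orders of the derivatives occurring being bounded by some $\nu_\lambda$.

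Next I would substitute these expressions into the integrand of \eqref{lqr}. Collecting the finitely many jet coordinates $z_\lambda^{(\mu)}$, $0\le\mu\le\nu_\lambda$, into a vector ${\bm\zeta}$, the maps ${\bf x}=\mathcal P{\bm\zeta}$ and ${\bf u}=\mathcal Q{\bm\zeta}$ are $\mathbb{R}$-linear, with constant coefficient matrices $\mathcal P,\mathcal Q$ built from the $P_i,Q_j$. Hence
\[
  J_{\rm LQR}=\int_0^T {\bm\zeta}^{\top} S\,{\bm\zeta}\,dt,\qquad S=\mathcal P^{\top}Q\,\mathcal P+\mathcal Q^{\top}R\,\mathcal Q .
\]
Because $A,B,Q,R$ are constant, $S$ is constant, so the integrand is a time-invariant quadratic form in the components of the flat output and their derivatives --- exactly the class of Lagrangians of Proposition 1 (here with $\chi=0$). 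Positive-semidefiniteness is immediate: since $Q,R\succ 0$ one has ${\bm\zeta}^{\top}S{\bm\zeta}={\bf x}^{\top}Q{\bf x}+{\bf u}^{\top}R{\bf u}\ge 0$, with equality precisely when ${\bf x}=0$ and ${\bf u}=0$.

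The remaining --- and delicate --- step is to upgrade this to strict positive-definiteness of $S$. By the previous observation, $S\succ 0$ is equivalent to injectivity of the linear jet map ${\bm\zeta}\mapsto({\bf x},{\bf u})=(\mathcal P{\bm\zeta},\mathcal Q{\bm\zeta})$, i.e. to the statement that no nonzero jet of the flat output produces a vanishing state and input. This is where I expect the main obstacle to lie, and where flatness must be used in the reverse direction: since ${\bf x},{\bf u}$ generate $\Lambda$, the basis ${\bf z}$ --- and thus its jet --- is recoverable from them (observability with respect to the flat output). The subtlety is that this recovery generically involves derivatives of ${\bf x}$ and ${\bf u}$, so for an arbitrary basis the instantaneous map ${\bm\zeta}\mapsto({\bf x},{\bf u})$ need not be injective and $S$ may be only semidefinite; one must therefore either restrict ${\bm\zeta}$ to a maximal $\mathbb{R}$-independent subfamily of jet coordinates actually carried by $({\bf x},{\bf u})$, or privilege a Brunovsk\'y-type basis (Eq.~\eqref{ex1} and the following remark) for which ${\bf x}$ supplies the lower derivatives and ${\bf u}$ the top ones, making the jet map a bijection. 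With such a choice the column rank of $(\mathcal P^{\top},\mathcal Q^{\top})^{\top}$ is full, $S$ is the pullback of ${\rm diag}(Q,R)\succ 0$ through an injective map, hence $S\succ 0$, which is the assertion.
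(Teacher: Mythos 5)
There is nothing in the paper to compare you against: the proof environment attached to Proposition~\ref{prop2} is empty. Judged on its own merits, the first two thirds of your argument are exactly the ``straightforward computations'' the authors presumably had in mind: freeness of the module gives $x_i = P_i(\frac{d}{dt}){\bf z}$, $u_j = Q_j(\frac{d}{dt}){\bf z}$, substitution turns the integrand of \eqref{lqr} into a constant-coefficient quadratic form $S$ in the jet coordinates, and positive \emph{semi}definiteness of $S$ is immediate from the positive definiteness of $Q$ and $R$. That part is correct and complete.

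Your worry about strict definiteness is not excessive caution; it is a genuine defect of the proposition as stated, and you could have made it concrete. Take the two-input system $\dot x_1 = u_1$, $\dot x_2 = u_2$ with $Q = R = I_2$, and the flat output $z_1 = x_1 + \dot x_2$, $z_2 = x_2$, a legitimate basis since it is obtained from the basis $(x_1,x_2)$ by a unimodular matrix over $\mathbb{R}[\frac{d}{dt}]$. Then $x_1 = z_1 - \dot z_2$, $u_1 = \dot z_1 - \ddot z_2$, $u_2 = \dot z_2$, so the integrand reads
\begin{equation*}
(z_1 - \dot z_2)^2 + z_2^2 + (\dot z_1 - \ddot z_2)^2 + \dot z_2^2 ,
\end{equation*}
which vanishes on the nonzero jet $(z_1, \dot z_1, z_2, \dot z_2, \ddot z_2) = (0,1,0,0,1)$: the form is only positive semidefinite. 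Since the expression of $\bf x$, $\bf u$ in a given basis is unique, no rewriting can repair this, so ``any flat output'' fails for $m \geqslant 2$. Of your two proposed fixes, the Brunovsk\'y-type basis is the right one: there the jet map onto $({\bf x},{\bf u})$ is bijective (triangular with invertible blocks), $S$ is the pullback of ${\rm diag}(Q,R)$ through an injective linear map, and definiteness follows; this proves the correct, existential version of the statement (``some flat output''), and it also covers the single-input case unconditionally, since for $m=1$ the basis is unique up to a nonzero constant factor. Your other fix --- restricting ${\bm\zeta}$ to a subfamily of jet coordinates --- does not work as stated: in the example above all five coordinates genuinely occur in the Lagrangian, and discarding any of them changes it. So the gap is in the paper's ``any,'' not in your argument; restate the proposition for a Brunovsk\'y-type flat output (or for $m=1$) and your proof closes it.
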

\begin{proof}
\end{proof}
%\subsubsection{Remark}
%    Proposition \ref{prop2} remains obviously valid if $Q = 0$ or $R = 0$.

\subsection{Two-point boundary-value problem}\label{tpbv}
Associate to Eq. \eqref{ex1} the criterion
\begin{equation}\label{J1}
    J_1 = \int_{0}^{T} \mathfrak{q}(x_1, \dots, x_n) + {\mathfrak{r}} u^2
\end{equation}
where $\mathfrak q$ is a positive-definite quadratic form with respect to the state variables, $\mathfrak r$ a real constant. Formula \eqref{ele} shows that the order with respect to $z = x_1$ of the corresponding linear homogeneous Euler-Lagrange equation is $2n$. The dimension of the $\mathbb R$-vector space of its solutions is equal to $2n$.  Let  $\{s_1 (t), \dots, s_{2n}(t) \}$ be a basis. Then $x_1 (t) = \sum_{\mu = 1}^{2n} c_\mu s_\mu (t)$, where $c_\mu \in \mathbb{R}$ is constant. The following two-point boundary conditions, i.e., numerical values for $x_\alpha (0)$, $x_\alpha (T)$, $\alpha = 1, \dots, n$ yield the following linear matrix equation for determining $c_\mu$:
\begin{equation}\label{sing}
\begin{pmatrix}
        x_1 (0) \\
        \vdots \\
        x_{n}(0) \\
        x_1 (T) \\
        \vdots \\
        x_n (T)
    \end{pmatrix} =
     \begin{pmatrix}
        s_1(0) & \cdots  & s_{2n}(0) \\
        \dot{s}_1(0) & \cdots & \dot{s}_{2n}(0) \\
         \cdots \\
         \frac{d^{n}}{dt^{n}}s_1(0) & \cdots & \frac{d^{n}}{dt^{n}}s_{2n}(0) \\
          s_1(T) & \cdots  & s_{2n}(T) \\
        \dot{s}_1(T) & \cdots & \dot{s}_{2n}(T) \\
        %\dot{s}_1(T) & \cdots & \dot{s}_{2n}(T) \\
         \cdots \\
         \frac{d^{n}}{dt^{n}}s_1(T) & \cdots & \frac{d^{n}}{dt^{n}}s_{2n}(T) 
    \end{pmatrix}
    \begin{pmatrix}
        c_1 \\
        \vdots \\
        c_{2n}
    \end{pmatrix}
\end{equation}
The above computations which are adapted from \cite{ince} yield the following results:
\begin{proposition}
If the square $2n \times 2n$ matrix in Eq. \eqref{sing} is invertible, i.e., if its determinant is nonzero, the coefficients $c_\mu$, $\mu = $ are given by the Cramer rule. The invertibility is independent of the choice of the basis $s_\mu (t)$.
\end{proposition}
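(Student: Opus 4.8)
The plan is to separate the two assertions, since the first is merely a restatement of Cramer's rule while the second is the substantive one. Writing the boundary data on the left of Eq. \eqref{sing} as a column vector $\mathbf b \in \mathbb R^{2n}$, the unknowns as $\mathbf c = (c_1, \dots, c_{2n})^\top$, and the displayed $2n \times 2n$ array as $S$, Eq. \eqref{sing} reads $\mathbf b = S\mathbf c$. If $\det S \neq 0$ then $S$ is invertible, so the system has the unique solution $\mathbf c = S^{-1}\mathbf b$, whose entries are given componentwise by $c_\mu = \det(S_\mu)/\det(S)$, with $S_\mu$ the matrix obtained from $S$ by replacing its $\mu$-th column by $\mathbf b$. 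This settles the first claim.

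For the basis-independence of the invertibility, the key idea is to read $S$ as the matrix of a single, basis-free linear map. Let $V$ be the $2n$-dimensional $\mathbb R$-vector space of solutions of the homogeneous order-$2n$ Euler-Lagrange equation attached to Eq. \eqref{J1}, and introduce the jet-evaluation map
\begin{equation*}
\Phi \colon V \longrightarrow \mathbb R^{2n}, \qquad
\Phi(f) = \bigl( f(0), \dots, f^{(n-1)}(0),\, f(T), \dots, f^{(n-1)}(T) \bigr)^\top,
\end{equation*}
where the successive derivatives encode the state components through $x_\alpha = x_1^{(\alpha-1)}$ dictated by Eq. \eqref{ex1}. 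Since differentiation and evaluation at a point are $\mathbb R$-linear, $\Phi$ is linear, and $S$ is precisely its matrix in the basis $\{s_1, \dots, s_{2n}\}$ of $V$ and the standard basis of $\mathbb R^{2n}$. Hence $\det S \neq 0$ if and only if $\Phi$ is an isomorphism, a property that belongs to $\Phi$ alone.

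It remains to record the transformation law under a change of basis. If $\{\tilde s_\nu\}$ is another basis of $V$, write $\tilde s_\nu = \sum_\mu P_{\mu\nu} s_\mu$ with $P \in \mathbb R^{2n \times 2n}$ the (invertible) change-of-basis matrix; applying $\Phi$ column by column and using its linearity yields $\tilde S = S P$, so that $\det \tilde S = \det S \cdot \det P$ with $\det P \neq 0$. Therefore $\det \tilde S \neq 0$ if and only if $\det S \neq 0$, which is the asserted independence. I do not anticipate a genuine obstacle: the argument is the standard fact that a matrix representation is invertible independently of the chosen basis, the only points requiring care being the correct identification of the rows of $S$ with the derivatives $x_1^{(\alpha-1)}$ and a clean derivation of the relation $\tilde S = S P$.
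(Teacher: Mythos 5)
Your proof is correct. Note that the paper itself offers no actual proof of this proposition: it merely remarks that ``the above computations which are adapted from \cite{ince}'' yield the result, and no proof environment follows. Your argument therefore supplies details the authors left entirely implicit. The route you take --- reading $S$ as the matrix of the basis-free jet-evaluation map $\Phi \colon V \to \mathbb{R}^{2n}$, so that invertibility of $S$ is equivalent to $\Phi$ being an isomorphism, and then establishing the transformation law $\tilde S = S P$ under a change of basis --- is the standard linear-algebra argument and is surely what the authors had in mind when deferring to classical ODE theory. Two points in your write-up deserve explicit credit. First, you correctly identify the boundary data as the derivatives of $x_1$ of orders $0$ through $n-1$ (via $x_\alpha = x_1^{(\alpha-1)}$ coming from Eq.~\eqref{ex1}); this silently repairs a slip in the paper's display \eqref{sing}, whose two blocks as printed run up to $\frac{d^{n}}{dt^{n}}$ and would therefore produce a $(2n+2) \times 2n$, hence non-square, matrix. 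Second, your derivation of $\tilde S = S P$, with $\det P \neq 0$, is the clean way of seeing that nonvanishing of the determinant is a property of $\Phi$ alone and not of the chosen fundamental system of solutions; a purely computational alternative (expressing each $\tilde s_\nu$ in terms of the $s_\mu$ and expanding the determinant) would reach the same conclusion but would obscure why the statement is true.
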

 
\subsubsection*{Application to time horizon optimality}
Consider, in Eq. \eqref{J1}, $J_1$ as a function of $T$. A minimum of $J_1$ for $T = T_0$ should be interpreted as an {\em optimal time horizon}. 

More general criteria and the multivariable case will be analyzed elsewhere.

\section{Closing the loop}\label{loop}
\subsection{Homeostat}
Associate to a LTI system $\Lambda$ the \emph{variational} system $\Lambda_\Delta$ via the $\mathbb R[\frac{d}{dt}]$-module variational isomorphism $\Delta: \Lambda \rightarrow \Lambda_\Delta$. For any system variable $w \in \Lambda$, $\Delta w^{(\nu)} = \frac{d^\nu}{dt^\nu} \Delta w$. For simplicity, we restrict ourselves to the monovariable case: 
\begin{itemize}
    \item System $\Lambda$, with a control (resp. output) variable $u$ (resp. $y$) is controllable and observable. 
         \item $\Lambda$ is minimum phase.\footnote{Unavoidable numerical approximations might lead to instabilities with a non-minimum phase system (see \cite{mfc1} for details).}
\end{itemize}
From $\sum_{\rm finite} a_\alpha y^{(\alpha)} = \sum_{\rm finite} b_\beta u^{(\beta)}$, $a_\alpha, b_\beta \in \mathbb{R}$, the variational system reads:
\begin{equation}\label{variat}
  \sum_{\rm finite} a_\alpha \frac{d^\alpha}{dt^\alpha} \Delta y = \sum_{\rm finite} b_\beta \frac{d^\beta}{dt^\beta} \Delta u  
\end{equation}
The minimum-phase property yields $b_0 \neq 0$. Select $\nu\geqslant 1$ such that $a_{\nu} \neq 0$. Rewrite Eq. \eqref{variat}
\begin{equation}\label{homeo1}
   \frac{d^{\nu}}{dt^{\nu}} \Delta y = \mathfrak{F} + \mathfrak{a} \Delta u
\end{equation}
where $\mathfrak{a} = \frac{b_0}{a_{\nu}}$, $\mathfrak{F} = \frac{\sum_{\beta \neq 0} b_\beta \frac{d^\beta}{dt^\beta} \Delta u - \sum_{\alpha \neq \nu} a_\alpha \frac{d^\alpha}{dt^\alpha} \Delta y}{a_\nu}$.
When $\mathfrak{F}$ takes also in account model mismatches and disturbances, call Eqn. \eqref{homeo1} \emph{homeostat} \cite{heol} which replaces the ultra-local model in model-free control \cite{mfc1,mfc2}. The numerous successful concrete case-studies in the existing literature suggest that $\nu = 1 \ {\rm or} \ 2$ are the only values which matter.

\subsection{iP and iPD controller}

\subsubsection*{Case $\nu = 1$ with iP controller} The loop is closed by an \emph{intelligent proportional} controller, or \emph{iP},
\begin{equation}
\label{ip}
\Delta u = - \frac{\mathfrak{F}_{\text{est}} + K_P \Delta y }{\mathfrak{a}}    
\end{equation}
where 
\begin{itemize}
    \item the estimate $\mathfrak{F}_{\text{est}}$ is given \cite{mfc1} by
   \begin{footnotesize}
    \begin{equation*}
\label{estim1} 
\begin{aligned}\mathfrak{F}_{\rm est} =& - \frac{6}{\tau^3} \int_{0}^{\tau} \left( (\tau - 2 \sigma)\Delta y(t-\tau+\sigma)\right.\\ & \left.\mbox{}- \sigma (\tau - \sigma)\alpha(t-\tau+\sigma)\Delta u(t-\tau+\sigma)\right)d\sigma
\end{aligned}
\end{equation*}
\end{footnotesize}
\item $\tau > 0$ is ``small,''
\item the gain $K_P \in \mathbb{R}$ ensures local stability if the estimate is ``good'':
$\mathfrak{F} - \mathfrak{F}_{\rm est} \approx 0$.
\end{itemize}

\subsubsection*{Case $\nu = 2$ with iPD controller} The iP \eqref{ip} is replaced by an \emph{intelligent proportional-derivative} controller, or \emph{iPD}, 
\begin{equation}
\label{ipd}
\Delta u = - \frac{\mathfrak{F}_{\text{est}} + K_P \Delta y + K_D \frac{d}{d\sigma} \Delta y}{\mathfrak{a}}    
\end{equation}
where the estimate is given \cite{mfc2} by
\begin{itemize}
    \item    \begin{footnotesize}\begin{eqnarray*}
    \mathfrak{F}_{\rm est} &=& \dfrac{60}{\tau^5}\left[
            \int_0^\tau \left(\big(\tau-\sigma\big)^2 -4\big(\tau-\sigma\big)\sigma + \sigma^2\right)\Delta y(t-\tau+\sigma)
            \right.\nonumber \\
    &&      \mbox{} -
       \left.\dfrac{1}2 \int_0^\tau (\tau-\sigma)^2\sigma^2\alpha (t-\tau+\sigma)\Delta u(t-\tau+\sigma)d\sigma\right]
  \end{eqnarray*}
  \end{footnotesize}
\item  $\tau > 0$ is again ``small,''
the gains $K_P, K_D \in \mathbb{R}$ ensure local stability if the estimate is ``good,''  
\item see Riachy's trick in \cite{mfc2,heol} for avoiding the estimation of the derivative of $\Delta y$.  
\end{itemize}

\section{Computer experiments}\label{exp}
\subsection{General principles}
Take a nominal controllable LTI system $\Lambda$. Section \ref{el} yields via a flat output an optimal nominal trajectory ${\mathfrak{w}}^*$ for any system variable $\mathfrak w$. In order to mitigate model mismatches and disturbances, the corresponding open-loop nominal control ${\bf u}^* = \{u_{1}^*, \dots, u_{m}^*\}$ needs to be completed by an appropriate closed-loop control as described in Sect. \ref{loop}. 

\subsection{A simplified DC motor}\label{motor}

%\color{green}
%\noindent\textbf{Modeling.}
\subsubsection*{Modeling} Follow \cite{fliess00} and consider a linear modeling \cite{babary} of a DC motor\footnote{See \cite{delaleau} for a general nonlinear modeling.} $J_m \dot\omega  = K_ti - \tau_l$, $L_m \frac{di}{dt} = u_m - R_mi - K_b\omega$
%\begin{eqnarray*}
%  J_m \dot\omega    &=& K_ti - \tau_l \\
 % L_m \frac{di}{dt} &=& u_m - R_mi - K_b\omega
%\end{eqnarray*}
where $\omega$ is the angular speed ([\SI{}{\radian\per\second}]),
$\tau_l$ is the disturbance load torque ([\SI{}{\N\m}]), $i$ is the
motor current ([\SI{}{\ampere}]) and $u_m$ is the control voltage
([\SI{}{\V}]).
%The nominal ratings are: power \SI{22}{kW}, voltage
%\SI{440}{V}, current \SI{52.6}{A} and angular velocity
%\SI{1500}{rpm}. 
The parameters are: $J_m$ the rotor inertia
(\SI{2.7}{kg\square\m}), $K_t$ the torque constant
(\SI{2.62}{\N\m\per\radian\second}), $L_m$ the coil inductance
(\SI{153}{\milli\henry}), $R_m$ the coil resistance (\SI{.465}{\ohm})
and $K_b$ the back emf constant
(\SI{2.62}{\V\per\radian\second}).
%With these parameters, the electric
%time constant is $T_\mathrm{e} = \frac{L_m}{R_m} = \SI{33}{\ms}$ and
%the electromechanical time constant is
%$T_\mathrm{em} = \frac{J_mR_m}{K_bK_t} = \SI{183}{\ms}$.
Set
%$a = \frac{K_t}{J_m} = \SI{0.9703704}{}$,
%$b = \frac{K_b}{L_m} = \SI{170.73965}{}$,
%$c = \frac{R_m}{L_m} = \SI{30.30303}{}$,
%$d = \frac{1}{L_m} = \SI{65.167807}{}$ and
%$e = \frac{1}{J_m} = \SI{0.3703704}{}$ one obtains the state
$a = \frac{K_t}{J_m} = \SI{0.970}{}$,
$b = \frac{K_b}{L_m} = \SI{171}{}$,
$c = \frac{R_m}{L_m} = \SI{30.3}{}$,
$d = \frac{1}{L_m} = \SI{65.1}{}$ and
$e = \frac{1}{J_m} = \SI{0.370}{}$. The DC bus voltage that feeds the chopper is $u_\mathrm{dc} = \SI{500}{V}$ and one can introduce the duty cycle $u$, $0<u<1$ such that $u_m = u_\mathrm{dc} u$. One obtains the state
representation as:
\begin{subequations}\label{dcm}
\begin{eqnarray}
  \dot x_1 &=& ax_2  - e \, \tau_l\\ %-e P_{ert}\\
  \dot x_2 &=& -bx_1 - cx_2 + d\,u_\mathrm{dc} \, u\\
  y        &=& x_1
\end{eqnarray}
\end{subequations}
with state variables $x_1 = \omega$ and $x_2 = i$. The nominal model
($T_l\equiv 0$) is controllable and $y$ is a flat output:
\begin{equation}\label{dc}
  u =\frac{1}{d\,u_\mathrm{dc}}\left(\frac{\ddot y + c\dot y}{a}+b y \right)
\end{equation}

%\noindent\textbf{Criterion.} Set
\subsubsection*{Criterion} Set
\begin{equation}
  \label{eq:dc:criterion}
  J_1 = \int_0^T \left( u^2 + (y_f - y)^2 + \dot y^2 + y^2\right) dt
\end{equation}
where $y_f$ is the final objective. The initial (resp. final)
conditions are $y(0) = \dot{y}(0) = 0$ (resp. $y(T) = y_f$,
$\dot{y}(T) = 0$). Consider $J_1$ as a function of the
horizon~$T$. Fig. \ref{To} displays a minimum at time
$T_0 \approx \SI{2.3}{\second}$: This is the optimal horizon.

%\noindent\textbf{iP control.}
\subsubsection*{iP control} Set $T = \SI{3}{\second}$, a sampling time equal
to $\SI{10}{\milli\second}$, and in \eqref{ip} $K_P=5$,
%$\alpha=\frac{ad}{c}u_\mathrm{dc} = \SI{2.086818x500}{}$.
$\alpha=\frac{ad}{c}u_\mathrm{dc} = \SI{105}{}$.
An additive measurement white
Gaussian noise $\mathcal{N}(0,1)$ is added. Take the disturbance
torque as $\tau_l(t) = 0$, if $t\leqslant \SI{2}{s}$, $\tau_l(t) =300\sin(\pi t)$
if $\SI{2}{s} < t \leqslant \SI{3}{s}$. Assume also a $50\,\%$ uncertainty on the parameter~$d$. Fig.~\ref{CPI} displays excellent performances.

\color{black}

\begin{figure*}[!ht]
\centering
\subfigure[\footnotesize $u^\star$]
{\epsfig{figure=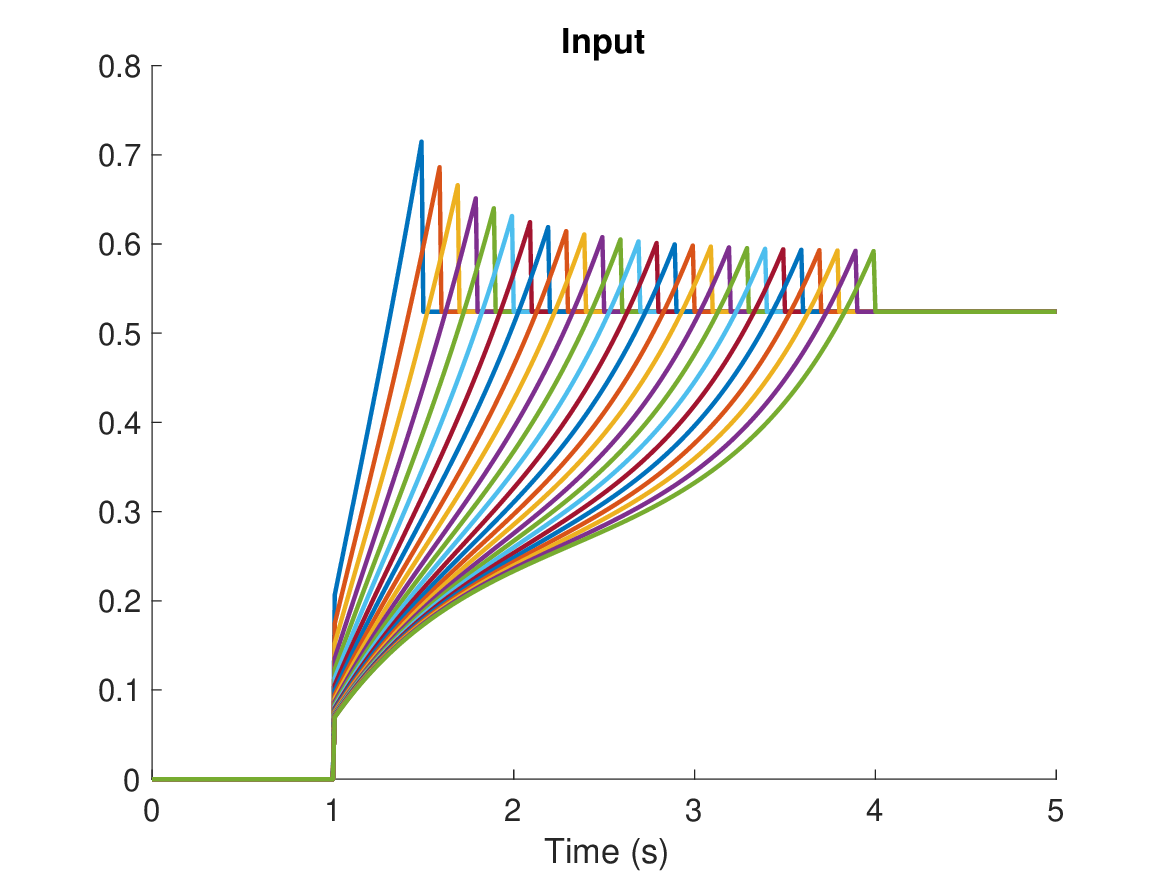,width=\tailleFFF\textwidth}}%\hspace{.5cm}
\subfigure[\footnotesize $y^\star$]
{\epsfig{figure=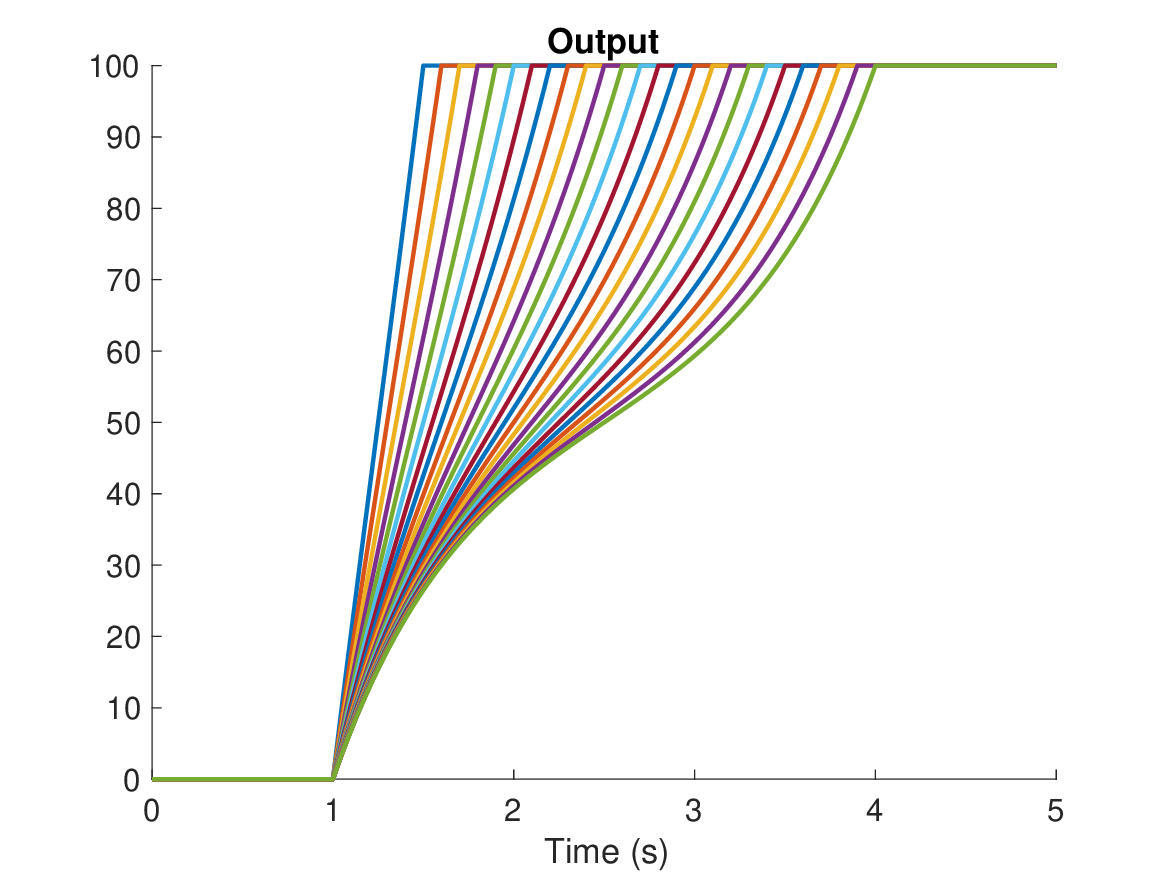,width=\tailleFFF\textwidth}}%\hspace{.5cm}
%
%\\
\subfigure[\footnotesize J]
{\epsfig{figure=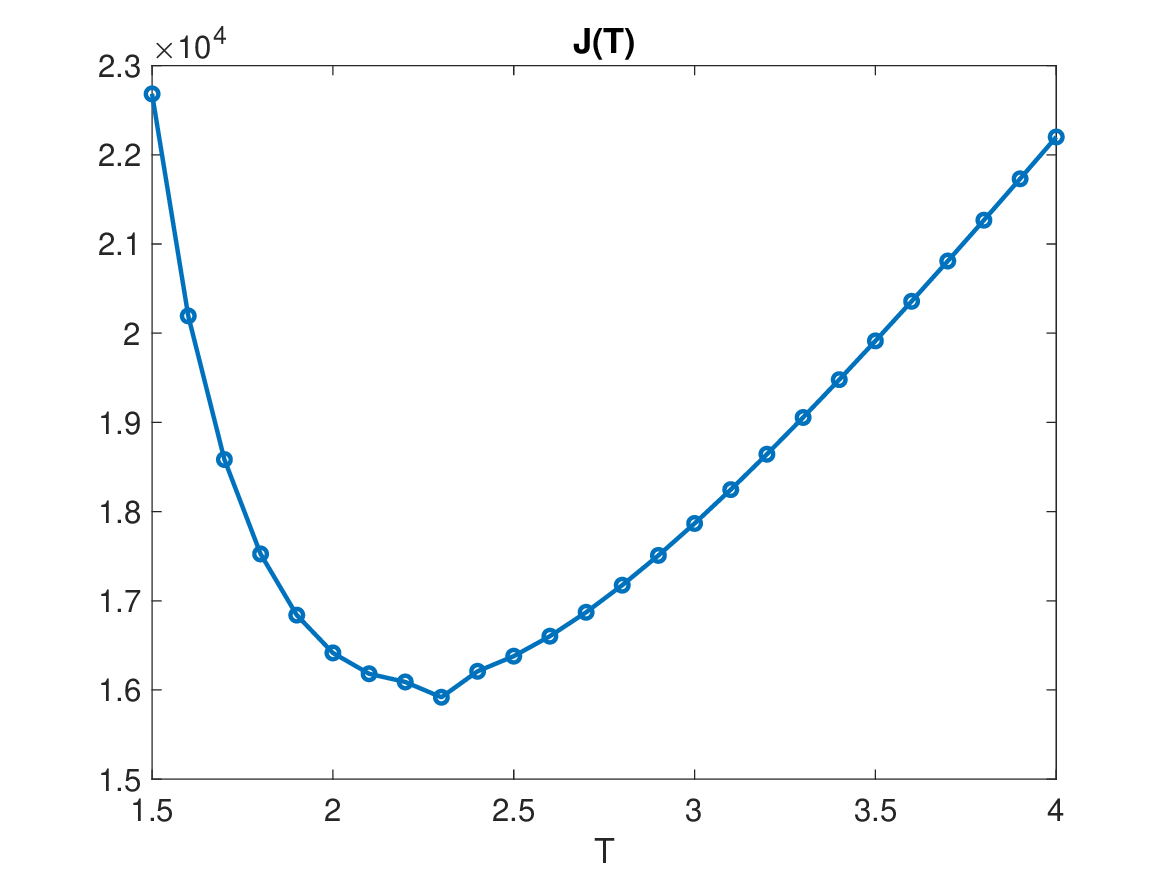,width=\tailleFFF\textwidth}}%\hspace{.5cm}
\caption{{DC motor optimal time $T$}}\label{To}
\end{figure*}

% \begin{figure*}[!ht]
% \centering
% \subfigure[\footnotesize $u^\star$]
% {\epsfig{figure=MOII.eps,width=0.45\textwidth}}%\hspace{.5cm}
% %
% \subfigure[\footnotesize $y^\star$]
% {\epsfig{figure=MOOI.eps,width=0.45\textwidth}}%\hspace{.5cm}
% %
% \caption{SISO L : open-loop with uncertainty}\label{OI}
% \end{figure*}

% \begin{figure*}[!ht]
% \centering
% \subfigure[\footnotesize $u^\star$]
% {\epsfig{figure=MCII.eps,width=0.45\textwidth}}%\hspace{.5cm}
% %
% \subfigure[\footnotesize $y^\star$]
% {\epsfig{figure=MCOI.eps,width=0.45\textwidth}}%\hspace{.5cm}
% %
% \caption{SISO L : closed-loop with uncertainty}\label{CI}
% %%%%%%%%%%%
% \end{figure*}
% \begin{figure*}[!ht]
% \centering
% \subfigure[\footnotesize $u^\star$]
% {\epsfig{figure=MOIPI.eps,width=0.45\textwidth}}%\hspace{.5cm}
% %
% \subfigure[\footnotesize $y^\star$]
% {\epsfig{figure=MOOPI.eps,width=0.45\textwidth}}%\hspace{.5cm}
% %
% \caption{SISO L : open-loop with uncertainty and perturbation}\label{OPI}
% \end{figure*}

\begin{figure*}[!ht]
\centering
\subfigure[\footnotesize {$u^\star$ (dotted) --- $u$ (red)}]
{\epsfig{figure=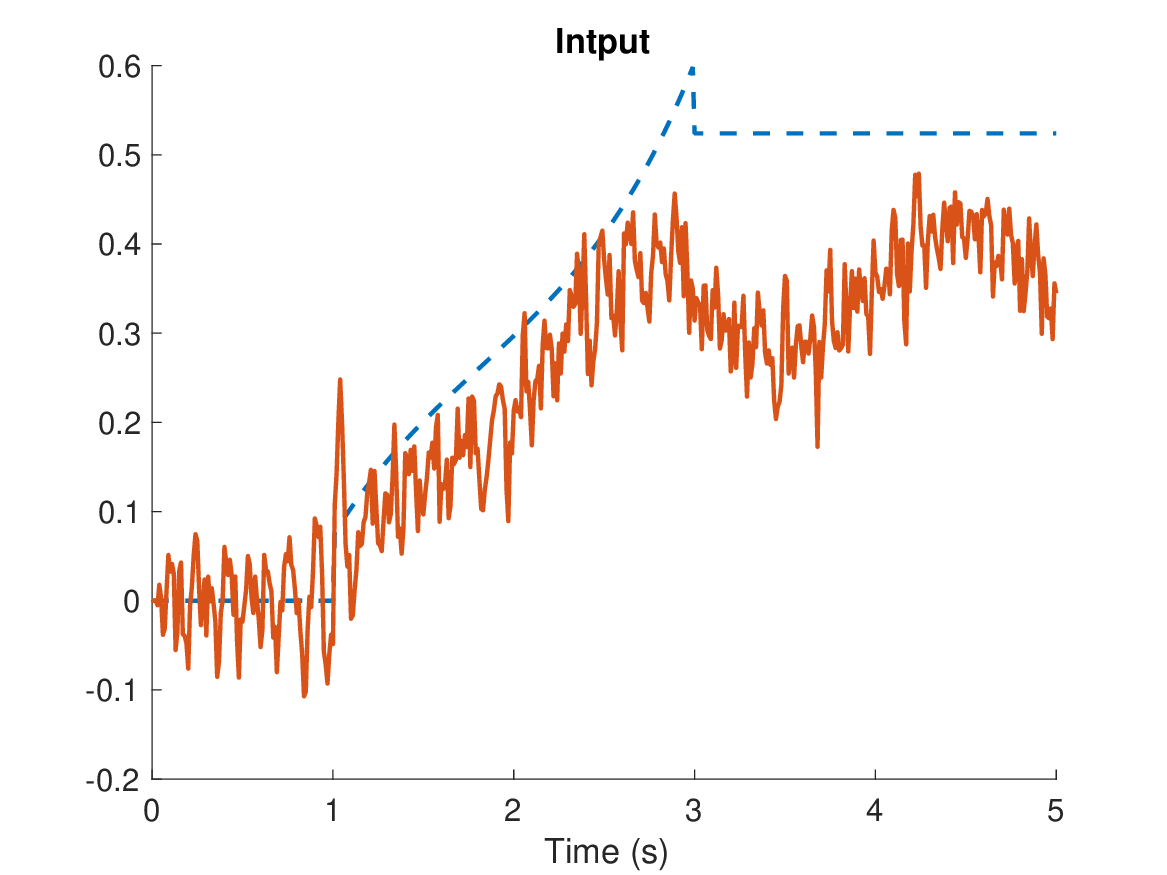,width=\tailleFF\textwidth}}%\hspace{.5cm}
\subfigure[\footnotesize {$y^\star$ (dotted) --- $y$ (red)}]
{\epsfig{figure=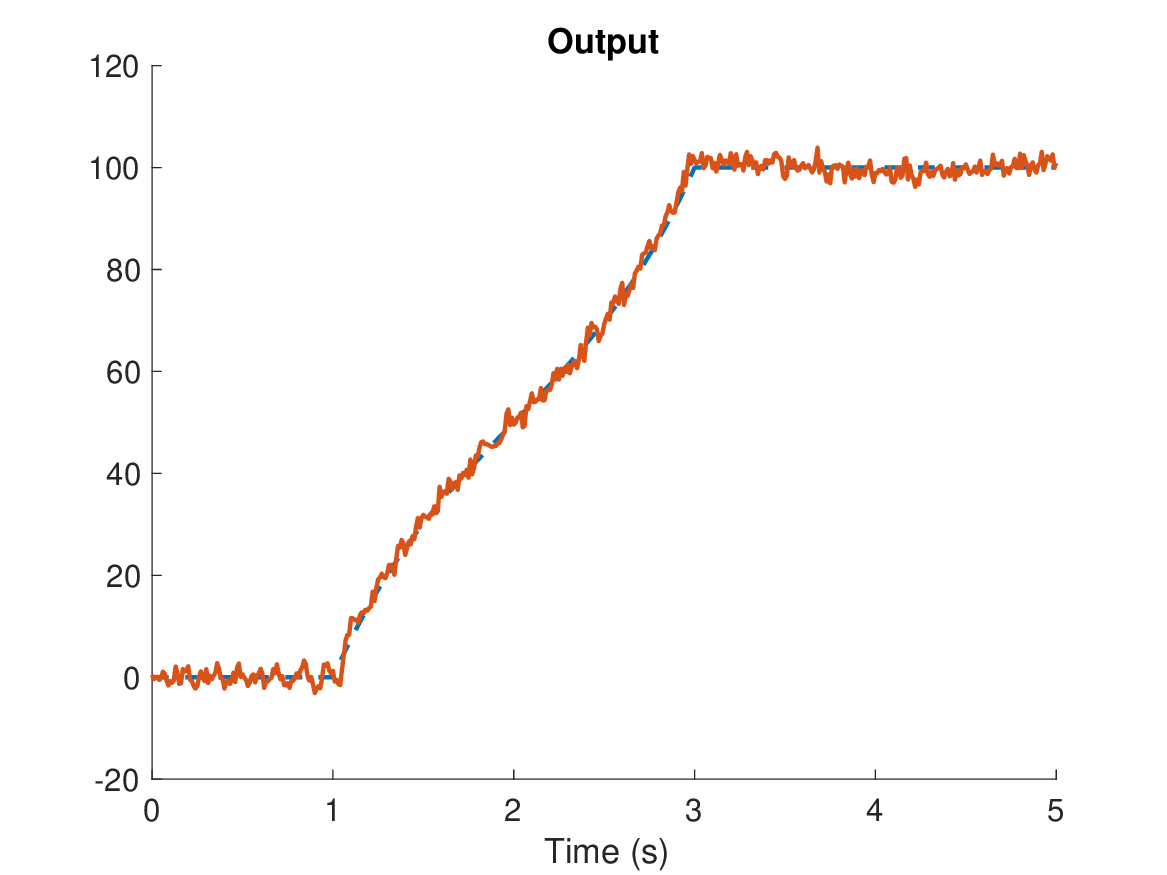,width=\tailleFF\textwidth}}%\hspace{.5cm}
\caption{{DC motor:} Closed-loop with uncertainty and {disturbance}}\label{CPI}
\end{figure*}
%%%%%%%%%%%%%%%%%%%%%%%%%%%%%%%%

\section{A brief discussion of nonlinear systems}\label{nl}
\subsection{General principles} Let $\Sigma$ be a flat systems: there exists a flat output ${\bf z} = \{z_1, \dots, z_m\}$ such that (see \cite{heol} for details and references)
\begin{itemize}
    \item any system variable may be expressed as a \emph{differential function} of the components of $\bf z$, i.e., as a function of $\bf z$ and the derivatives of its components up to some finite order,
    \item the components of $\bf z$ are differentially independent, i.e., they are not related by any differential relation.
\end{itemize}
The Lagrangian may thus be expressed as a differential function of $\bf z$. The corresponding Euler-Lagrange equation is in general nonlinear. With appropriate boundary conditions it yields an open-loop control strategy. The homeostat \cite{heol}, which is built via the time-varying linearized system around the corresponding trajectory, permits to construct the stabilizing iP or iPD.

\subsection{An elementary academic example}
%\commentaire{Le caract\`eres gothiques rendent la lecture difficile}

The system $\mathfrak{T} \dot y + y^3 = \mathfrak{K} u$  where $u$ (resp. $y$) is the control (resp. output) variable, and $\mathfrak{T} = 2$, $\mathfrak{K} = 0.1$ are two constant parameters, 
is obviously flat: $u$ is a differential function of the flat output $y$
\begin{equation}\label{nl2}
 u =\frac{\mathfrak{T} \dot y + y^3}{\mathfrak{K}}   
\end{equation}
\subsubsection*{Comparison between two Lagrangians}
Eq. \eqref{nl2} shows that the Euler-Lagrange equation associated to an energy-oriented Laplacian $\mathcal{L}_{\rm energ} = (y_{f}-y)^2 + {u}^2$, where $y_f = y(T)$, is nonlinear with respect to $y$. It reads 
$$y +\frac{1}{\mathfrak{K}^2}\left( 3 {y}^5-\mathfrak{T}^2\ddot y\right) =  y_f$$
A classic shooting method \cite{keller} permits to solve it at once with a time horizon $T = 3$ and the boundary conditions $y(0) = 0.1$, $y(3) = 1.5$. It yields $\int_0^3 \mathcal{L}_{\rm energ}dt \simeq 1.11.10^3$. 

With a Lagrangian 
$\mathcal{L}_{\rm lin} = (y_{f}-y)^2 + {\dot y}^2$, which is similar to those in Sec. \ref{el} but, here, without a clear physical meaning, we get again a linear Euler-Lagrange equation, which is straightforward to solve analytically. Let us plug therefore the corresponding numerical values of $u$, $y$ in $\mathcal{L}_{\rm energ}$ and integrate again between $0$ and $3$. We obtain $2.05.10^3$. 

There is no question about the superiority of the first approach.

\subsubsection*{Homeostat}
Set $\Delta y= y-y^\star$, $\Delta u=u-u^\star$, $u^\star$, $y^\star$ correspond to the optimal solution with $\mathcal{L}_{\rm energ}$. Introduce the homeostat 
$\frac{d}{dt}\Delta y=F+K\Delta u$.
Instead of the iP \eqref{ip}, introduce 
$$u=\frac{1}{K}\left(-F_e +K_p\Delta y \right)+u^\star$$
where $F_{\rm est}$ is an estimate of $F$, $K_P=-10$. Set $K=0.1$, $T=2$, $y(0)=0.15$. The sampling time is $T_e=0.01s$. Fig. \ref{TirBf} displays results  with large uncertainities on parameters: $K_{system}=0.7 K_{model}$, $T_{system}=1.3 T_{model}$. The tracking remains excellent. The control variable on the other hand is now quite far from its nominal value.%\textcolor{green}{plus vraiment le cas %maintenant}

\begin{figure*}[!ht]
\centering
\subfigure[\footnotesize $u^\star$ (- -) and $u$ (--)]
{\epsfig{figure=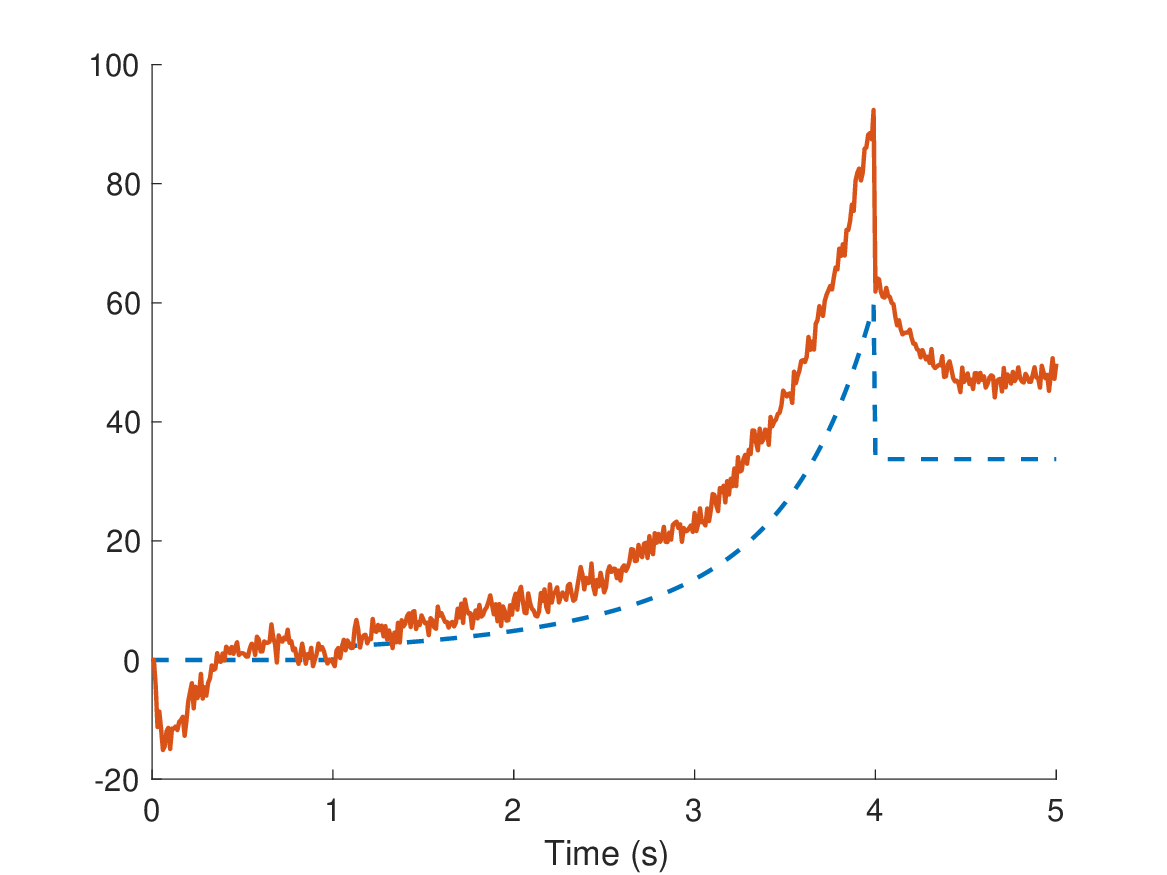,width=0.4\textwidth}}%\hspace{.5cm}
\subfigure[\footnotesize $y^\star$ (- -) and $y$ (--)]
{\epsfig{figure=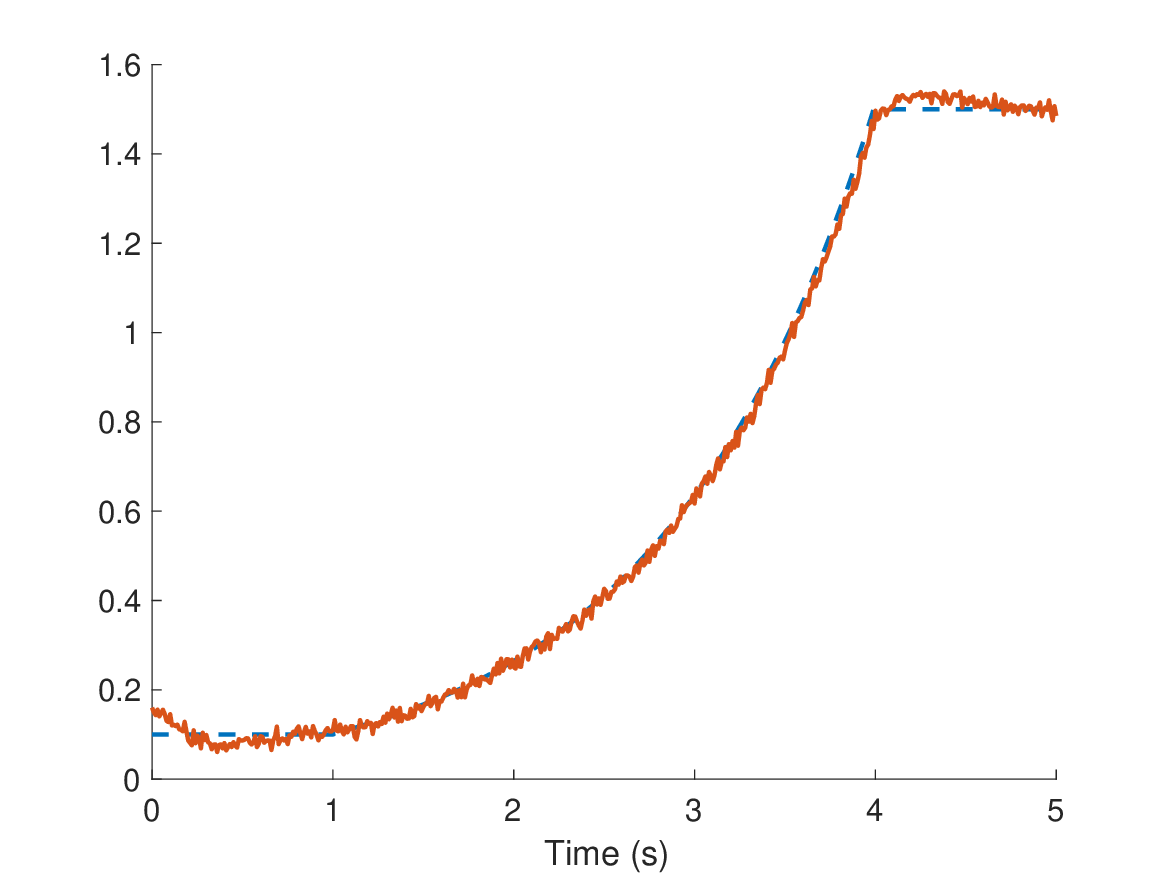,width=0.4\textwidth}}%\hspace{.5cm}
\caption{Uncertain nonlinear system}\label{TirBf}
\end{figure*}

\section{Conclusion}\label{conclu}
Our approach to optimal control for LTI systems show results which were beyond the reach for LQRs, especially final conditions assignment and time horizon optimization. Don't they suggest that the infinite time horizon becomes pointless? Of course the proof which were for simplicity's sake restricted to monovariable systems should be extended to several control and output variables. The computer implementation ought also to be much more thoroughly investigated. 

The exploratory Section \ref{nl} on nonlinear optimal control should be developed in order to be more convincing.   

This text opens up new avenues for exploring the links between optimal control, robustness and various learning techniques when related to predictive control, key topics in today's literature (see, e.g., \cite{h,al,reinf,gros,scherer}).

\end{document}